\newtheorem{theorem}{Theorem}
\newtheorem{lemma}[theorem]{Lemma}
\newcommand{\NN}{\mathbb{N}}
\newcommand{\ZZ}{\mathbb{Z}}
\newcommand{\CC}{\mathbb{C}}
\newcommand{\mcV}{\mathcal{V}}              % vertex, edges, triangles.
\newcommand{\mcE}{\mathcal{E}}
\newcommand{\mcT}{\mathcal{T}}
\newcommand{\g}{\mathfrak{g}}     % vector
\newcommand{\norm}[1]{\left\Vert#1\right\Vert}    % norm
\newcommand{\floor}[1]{\left\lfloor#1\right\rfloor}        % lower integer part
\newcommand{\fractional}[1]{\left\lbrace#1\right\rbrace}        % fractional part
\DeclareMathOperator{\Prob}{Pr}              % probability
\newcommand{\expect}{\mathbb{E}}             % expectation
\DeclareMathOperator{\adde}{E}               % additive energy
\begin{document}
\title{Undecidability of Polynomial Inequalities in Subset Densities and Additive Energies}
%
%\titlerunning{Undecidability of Polynomial Inequalities in Densities and Energies}
% If the paper title is too long for the running head, you can set
% an abbreviated paper title here
%
\author{Yaqiao Li\footnote{Shenzhen University of Advanced Technology, Shenzhen, China, liyaqiao@suat-sz.edu.cn}}

\maketitle              % typeset the header of the contribution
\begin{abstract}
Many results in extremal graph theory can be formulated as certain polynomial inequalities in graph homomorphism densities. Answering fundamental questions raised by Lov{\'a}sz, Szegedy and Razborov, Hatami and Norine proved that determining the validity of an arbitrary such polynomial inequality in graph homomorphism densities is undecidable. We observe that many results in additive combinatorics can also be formulated as polynomial inequalities in subset's density and its variants. Based on techniques introduced in Hatami and Norine, together with algebraic and graph construction and Fourier analysis, we prove similarly two theorems of undecidability, thus showing that establishing such polynomial inequalities in additive combinatorics are inherently difficult in their full generality. 

%\keywords{Undecidable  \and Subset density  \and Homomorphism density  \and Additive energy.}
\end{abstract}

\section{Introduction}

A central theme in additive combinatorics \cite{TV2006additive}  is  the interplay between subset density %\footnote{Density is  defined in a different way when the sets are infinite, but it does not concern us.} 
and additive structures, the most studied being arithmetic progressions.
A long line of work in this direction culminated in Szemer{\'e}di's theorem, which states that any subset of natural numbers with sufficiently high density contains a long arithmetic progression. 
A variety of remarkable proofs of Szemer{\'e}di's theorem have since been discovered, and have been called by Tao \cite{tao_ICM2005dichotomy} a ``Rosetta stone'' connecting various fields ranging from ergodic theory to Fourier analysis.

Let $A \subseteq G$   be a subset of a finite abelian group $G$, and $\alpha(A) = |A|/|G|$ denotes its density. One way to measure additive structure is to consider the size of sumset
    $A+B = \{a+b: a\in A, b \in B\}$.
The ratio $\sigma(A) = \frac{|A + A|}{|A|}$ is called the \emph{doubling constant} of  $A$.
For example, $\sigma(A) = O(1)$  when $A$ is an arithmetic progression, while $\sigma(A)$ is large when $A$ is random. Another key notion to quantify the additive structure of $A$ is its \emph{additive energy}  $\adde(A)$  defined to be the number of additive quadruples in $A$,
\begin{equation}    \label{eq:def_addEnergy}
    \adde(A) = \left| \{(a_1, a_2, a_3, a_4) \in A^4 : a_1 + a_2 = a_3 + a_4 \} \right|.
\end{equation}
It is easy to see that
    $|A|^2 \le \adde(A) \le |A|^3$.
The higher additive energy, the ``more'' additive structures there are. Additive energy   has been studied in various settings \cite{AE_bloom2018additive,AE_de2023additive,AE_shao2024additive,AE_goh2024entropic}. Below are some general results that concern with subset densities and additive energies. 

\begin{itemize}
    \item The Kneser's theorem \cite{nathanson1996additive}, generalizing the Cauchy-Davenport theorem, states that
        $|A + B| \geq |A| + |B| - |H|$,
    where $A, B \subseteq G$ and $H$ is the stabilizer  of $A+B$. This can be rephrased as
    \begin{equation}    \label{eq:Kneser}
        \alpha(A+B) - \alpha(A) - \alpha(B) + \alpha(H) \ge 0, \quad \forall\ A, B \subseteq G, \quad \forall\ G.
    \end{equation}

    \item The Pl{\"u}nnecke-Ruzsa inequality states that
    $|rB - sB| \le c^{r+s} |A|$
    where
        $c = |A+B|/|A|$,
    and $rB-sB$ denotes the set $B+\cdots+B - B - \cdots - B$ with $r$ $B$'s in the sum and $s$ $B$'s in the difference. This can also be equivalently reformulated as 
    \begin{equation}    \label{eq:Plunnecke-Ruzsa}
        \alpha(A+B)^{r+s} - \alpha(A)^{r+s-1} \cdot \alpha(rB-sB) \ge 0, \quad \forall\ A, B \subseteq G, \quad \forall\ G.
    \end{equation}

    \item The doubling constant and additive energy are closely related. 
    Let 
    \begin{equation}    \label{eq:def_r_Ax}
        r_A(x) = |\{(a_1, a_2) \in A^2 : a_1 + a_2 = x\}|,
    \end{equation}
    which counts how many pairs from $A$ sum to $x$. By double counting we have
        $|A|^2 = \sum_{x \in A+A} r_A(x)$.
    By Cauchy-Schwarz,
    %\begin{equation}    \label{eq:energy-vs-doubling}
    \[
        |A|^4 = \left( \sum_{x \in A+A} r_A(x) \right)^2
        \le \left( \sum_{x \in A+A} r_A(x)^2 \right) \cdot \left( \sum_{x \in A+A} 1^2 \right)
        = \adde(A) \cdot |A+A|.
    \]
    %\end{equation}
    Rewriting gives
        $\adde(A)/|A|^3 \ge 1/\sigma(A)$,
    often described as small doubling implies high additive energy, see a related inequality in \cite{katz2010additive}.
    Dividing both sides with $|G|^4$, we get
    \begin{equation}    \label{eq:poly-energy-vs-doubling}
        \frac{\adde(A)}{|G|^3} \cdot \alpha(A+A) - \alpha(A)^4 \ge 0, \quad \forall\ A \subseteq G, \quad \forall\ G.
    \end{equation}
    
\end{itemize}

The formulation \eqref{eq:Kneser}, \eqref{eq:Plunnecke-Ruzsa} and \eqref{eq:poly-energy-vs-doubling} demonstrate that many results in additive combinatorics can be stated  as the non-negativity of certain polynomials in subset's densities and additive energies of related subsets, and suitable (sometimes repeated) application of Cauchy-Schwarz inequality is often useful for deriving these results, see examples in \cite{gowers2017quantitative}. Other deep results in additive combinatorics such as Freiman’s theorem, Balog-Szemer{\'e}di-Gowers theorem, and the polynomial Freiman-Ruzsa conjecture are also in similar spirit, see a recent breakthrough in \cite{PFR,liao2024improved}.

Szemer{\'e}di originally proved his theorem for arithmetic progressions by developing a far-reaching theorem called Szemer{\'e}di regularity lemma, which is a major step toward the modern study of dense graph limits developed by Lov{\'a}sz et al and the flag algebra by Razborov, in which the graph homomorphism density plays a key role. The homomorphism density of a (small) graph $H$ in a graph  $G$, denoted by $t(H,G)$, is the probability that a random mapping (not necessarily injective) from the vertices of $H$ to the vertices of $G$ maps every edge of $H$ to an edge of $G$. Many important results in extremal graph theory can be described using  polynomial inequalities in homomorphism densities. For example,  Goodman’s theorem \cite{goodman1959sets}, which generalizes the
classical Mantel-Tur{\'a}n theorem, says that 
    $t(K_3, G) - 2t(K_2, G)^2 + t(K_2, G) \ge 0$
holds for every graph $G$, where $K_2$ and $K_3$ denote the edge graph and the triangle graph, respectively. As another example, Sidorenko conjectures that
    $t(H, G) - t(K_2, G)^{|E(H)|} \ge 0$
holds for every bipartite graph $H$ and every graph $G$, where $E(H)$ denotes the edge set of graph $H$.

The above instances show that understanding what polynomial inequalities hold is of central importance in either additive combinatorics or extremal graph theory.  From the perspective of computational complexity, it is therefore a natural and fundamental question to understand the difficulty of establishing such  polynomial inequalities involving the homomorphism density in extremal graph theory, or involving the subset densities and its variants in additive combinatorics. The first question in different forms concerning extremal graph theory  has been formally asked by Lov{\'a}sz, Szegedy and Razborov, and was answered in negative in a breakthrough by Hatami and Norine \cite{hatami2011undecidability}. Specifically, they showed that the problem of determining the validity of a
polynomial inequality between homomorphism densities  is undecidable. Recently, their results have been extended in \cite{blekherman2020simple,blekherman2024undecidability,chen2024undecidability}. In this paper, we consider the second question concerning additive combinatorics. Adapting the techniques in \cite{hatami2011undecidability}, together with algebraic and graph construction and Fourier analysis, we prove similarly two theorems of undecidability, thus showing that establishing such inequalities in additive combinatorics are inherently difficult in their full generality.

% use the following for slides to introduce visualization of additive energy.
%``Certainly I sometimes visualise an additive quadruple as a kind of `chemical bond' between four `atoms' in a set A, so that the additive energy measures something like a `latent energy' in that set.'' by Terry Tao \cite{AddE_Tao}

We need some notation to state the theorems. Let 
\[
    L = \{f_1(g_1, \ldots, g_k), \ldots, f_d(g_1, \ldots, g_k)\}
\]
be a formal system of linear forms of variables $g_1, \ldots, g_k$. Recall $A \subseteq G$ where $G$ denotes a finite abelian group. We use $L \in A$ to denote that $f_i(g_1,\ldots,g_k) \in A$ for every $i$. By an abuse of notation, we define
\begin{equation} \label{def:t-L-A}
    t(L, A) := \Prob[L \in A] = \Prob[f_1(g_1, \ldots, g_k) \in A, \cdots, f_d(g_1, \ldots, g_k) \in A],
\end{equation}
where $(g_1, \ldots, g_k) \in G^k$ is sampled uniformly at random. Note that we have used the notation $t(\cdot, \cdot)$ to denote both the graph homomorphism density and the probability of $L\in A$. The specific meaning of $t(\cdot, \cdot)$ should be clear from the context. Below are some examples of  $t(L,A)$.
\begin{enumerate}[(i)]
    \item $L=\{f(g) = g\}$. Then, 
        $t(L, A) = \Prob[g \in A] = |A|/|G| = \alpha(A)$
    is the density of $A$.

    \item $L=\{f_1(g,h) = g, f_2(g,h)=h, f_3(g,h) = g+h\}$. Then, 
        $t(L, A) = \Prob[g \in A, h\in A, g+h\in A] = \left( \sum_{x \in A} r_A(x) \right)/ |G|^2$, where $r_A(x)$ is as defined in \eqref{eq:def_r_Ax}.
    
    \item $L=\{f_1(g,h,k) = g, f_2(g,h,k) = h, f_3(g,h,k)=k, f_4(g,h,k) = g+h-k\}$. Then,  
        $t(L, A) = \Prob[g \in A, h \in A, k\in A, g+h-k\in A] =  \adde(A)/|G|^3$,
    i.e., it is a normalized version of the additive energy of $A$ appearing in \eqref{eq:poly-energy-vs-doubling}.
\end{enumerate}

These examples show that, depending on the system of linear forms $L$, the probability $t(L,A)$ can express both subset density and additive energy, and other more complicated variants. 

Let $L, L'$ be two systems of linear forms. We interpret the formal product $L\cdot L'$ as 
    $t(L\cdot L', A) = t(L,A) \cdot t(L',A)$.
In this way, we may consider general \emph{quantum} system of linear forms where we allow formal product of systems of linear forms.

\begin{theorem}  \label{thm:general-undecidable}
The following problem is undecidable.
\begin{itemize}
  \item INSTANCE: Two positive integers $m, k$, quantum systems of linear forms $L_1, \ldots, L_m$ on $k$ variables $g_1, \ldots, g_k$, and integers $a_1, \ldots, a_m$.
  \item QUESTION: Does the inequality $a_1 t(L_1, A) + \cdots + a_m t(L_m, A) \ge 0$ hold for every subset $A \subseteq G$ and every finite abelian group $G$?
\end{itemize}
\end{theorem}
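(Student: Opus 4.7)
The plan is to reduce from Hilbert's tenth problem: by the Matiyasevich--Robinson--Davis--Putnam theorem, determining whether a polynomial $p \in \ZZ[x_1,\ldots,x_n]$ has a zero in $\NN^n$ is undecidable. Following the blueprint of Hatami and Norine, I would show that for any such Diophantine polynomial $p$, one can effectively construct an instance $(L_1, \ldots, L_m, a_1, \ldots, a_m)$ of the inequality problem such that $\sum_j a_j t(L_j, A) \geq 0$ holds for every $A \subseteq G$ and every finite abelian $G$ if and only if $p$ has no natural-number zero.

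The basic ingredients for such a reduction are (i) a way to encode rationals as densities and (ii) polynomial arithmetic in those encodings. For (i), taking $G = \ZZ/b\ZZ$ and $A = \{0,1,\ldots,a-1\}$ realizes $\alpha(A) = a/b$, so every rational in $[0,1]$ is of the form $t(\{g\}, A)$. For (ii), the product rule $t(L \cdot L', A) = t(L, A)\,t(L', A)$ for quantum systems means that every polynomial with integer coefficients in the quantities $t(L_i, A)$ can be expressed as an integer combination of $t$-values of formal products, so the inequality language is expressive enough to state arbitrary polynomial inequalities in the density profile of $A$.

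The next step is to encode the $n$ separate variables $x_1,\ldots,x_n$ of $p$ into the density profile of a single set $A$. I would work over a product group $G = G_1 \times \cdots \times G_n$ and use systems of linear forms supported only on specific coordinates, so that for a product set $A = A_1 \times \cdots \times A_n$ one can extract each $\alpha(A_i)$ as a separate $t(L_i, A)$. One then writes down an inequality that is violated precisely by the set corresponding to a natural-number solution of $p$ --- after rescaling each $x_i$ into $[0,1]$ through the choice of $|G_i|$ and extracting integrality of $x_i$ from the density $\alpha(A_i)$ by multiplying with Lagrange-type factors, analogously to the Hatami--Norine construction.

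The main obstacle lies in the converse direction: ruling out violating sets $A$ that are not of product form in arbitrary groups $G$. To this end I would add ``penalty'' terms to the quantum polynomial which are non-negative in general but vanish only on configurations close to products of arithmetic progressions. This is where Fourier analysis on finite abelian groups becomes essential: sumset identities, additive-energy bounds, and Cauchy--Schwarz inequalities in the spirit of \eqref{eq:poly-energy-vs-doubling} can each be packaged as non-negative expressions in quantum systems of linear forms, and combining enough of them should force any would-be counterexample $A$ to have near-product structure with rational coordinate densities matching $x_i/|G_i|$. Engineering a finite collection of such constraints that collectively enforce the desired rigidity, while preserving the Diophantine reduction, is the technical heart of the argument and the step where the algebraic and graph-theoretic constructions from \cite{hatami2011undecidability} must be genuinely adapted to the additive setting.
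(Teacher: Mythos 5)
Your proposal identifies the right high-level ingredients (reduce from Hilbert's tenth, encode rationals as densities, exploit multiplicativity of quantum systems), but it leaves the central difficulty unresolved and, more importantly, it heads toward a route that the paper deliberately avoids. You acknowledge the key obstacle yourself: "ruling out violating sets $A$ that are not of product form in arbitrary groups $G$." Your proposed remedy --- adding unspecified Fourier-analytic "penalty terms" that would force a near-product structure on any counterexample --- is precisely the technical heart, and nothing in the proposal explains what those terms are, why finitely many of them suffice, or how they would enforce rigidity without also killing the witnessing sets. Likewise, the step where you "extract integrality of $x_i$ from the density $\alpha(A_i)$ by multiplying with Lagrange-type factors" is not an available move: a density $\alpha(A) = a/b$ does not, by itself, carry integrality information, and the whole difficulty in Hatami--Norine-type arguments is to build an algebraic device that forces the relevant quantities onto a discrete set. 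As written, the proposal is a plan rather than a proof, with the hardest two steps deferred.

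The paper takes a different and cleaner route that sidesteps both problems. Rather than reducing directly from Hilbert's tenth problem and worrying about the structure of arbitrary $A$, it reduces from Hatami and Norine's Lemma 5.1/5.4, which already gives undecidability of deciding non-negativity of $q(x_1,\ldots,x_k,y_1,\ldots,y_k)$ over the Bollob{\'a}s region $R^k$ (where $R$ is cut out by the triangle-vs-edge density lower bound). The crucial construction is a family of systems of linear forms $\mathcal{V}_j,\mathcal{E}_j,\mathcal{T}_j$ (built around a "pinpoint" system $L$ and $M$) such that, for each fixed $\mathbf{g}$ with $M(\mathbf{g})\in A$, one obtains an auxiliary directed graph $U_j(\mathbf{g})$ whose edge and triangle homomorphism densities are exactly the ratios $t(\mathcal{E}_j,A)/t(\mathcal{V}_j,A)^2$ and $t(\mathcal{T}_j,A)/t(\mathcal{V}_j,A)^3$. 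The Bollob{\'a}s bound then forces these to lie in $R$ \emph{for every} $\mathbf{g}$ and \emph{every} $A$, so the forward implication $q\ge 0$ on $R^k \Rightarrow t(\psi(q^*),A)\ge 0$ is pointwise in $\mathbf{g}$ and requires no structure theorem about $A$ whatsoever. The reverse implication is handled by an explicit $G=\ZZ_{(k+1)^2}\times H$ and a union-of-cosets set $A$, with Lemma~\ref{lem:pinpoint} ensuring the linear forms pin $\mathbf{g}$ to the intended values. In short, the integrality and rigidity issues you are trying to solve with penalty terms are already absorbed into the Bollob{\'a}s region $R$ and into the pinpoint lemma; without an analogue of those two devices, your sketch does not go through.
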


Note that the polynomial $\sum_i a_i t(L_i, A)$ is linear in the variables $t(L_i,A)$, which includes not only $\alpha(A)$ and $\adde(A)$, but also other quantities as we discussed earlier.  The second theorem concerns more specifically to only subsets densities and additive energies. 

\begin{theorem} \label{thm:density-Energy-undecidable}
The following problem is undecidable.
\begin{itemize}
  \item INSTANCE: A positive integer $k \ge 2$, and a polynomial $q(x_1, \ldots, x_k, y_1, \ldots, y_k) \in \ZZ[x_1, \ldots, x_k,  y_1, \ldots, y_k]$.
  \item QUESTION: Does the inequality $q(\alpha(A_1), \ldots, \alpha(A_k), \adde(A_1), \ldots, \adde(A_k)) \ge 0$ hold for all $A_i \subseteq G_i$ and all finite abelian groups $G_i$, for $i=1,\ldots, k$?
\end{itemize}
\end{theorem}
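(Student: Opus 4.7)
The plan is to reduce Theorem~\ref{thm:general-undecidable} to Theorem~\ref{thm:density-Energy-undecidable}: given an instance $P(A)=\sum_i a_i t(L_i,A)$ of the former problem, I will manufacture a polynomial $q$ in the $2k$ variables $\alpha(A_1),\ldots,\alpha(A_k),\adde(A_1),\ldots,\adde(A_k)$ whose universal non-negativity over all tuples $(A_i\subseteq G_i)$ is equivalent to $P(A)\ge 0$ for every single subset $A$ of every finite abelian group $G$.

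The first step is a \emph{realization lemma}. For every quantum linear form system $L$ on variables $g_1,\ldots,g_r$ with forms $f_1,\ldots,f_d$ and every $A\subseteq G$, the pullback
\[
  B(L,A)\;=\;\bigl\{(g_1,\ldots,g_r)\in G^r : f_j(g_1,\ldots,g_r)\in A\ \text{for all}\ j\bigr\}\subseteq G^r
\]
has density $\alpha(B(L,A)) = t(L,A)$. Thus each $t(L_i,A)$ appearing in $P(A)$ is literally a density $\alpha(B_i)$ of an auxiliary subset in an auxiliary product group. The catch is that the $B_i$'s built from a common $A$ are mutually correlated, whereas the $A_i$'s in Theorem~\ref{thm:density-Energy-undecidable} vary freely; so these correlations must be cut out polynomially.

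The second step is to enforce the correlation constraints using only $\alpha$ and $\adde$. Plancherel gives $\adde(B)/|H|^3=\sum_\chi|\widehat{1_B}(\chi)|^4$ for any subset $B$ of an abelian group $H$; applied to the pullbacks $B(L,A)$ this expresses $\adde(B(L,A))$ itself as a polynomial in the Fourier coefficients of $1_A$, which in turn are density counts of explicit linear-form systems and therefore, by the realization lemma again, densities of further auxiliary pullbacks. Iterating this Fourier expansion yields a finite family of polynomial identities among the $\alpha(B_i)$ and $\adde(B_i)$ that carve out the ``admissible'' locus actually arising from a single $A$. Using the standard sum-of-squares trick from real algebraic geometry, I will absorb these identities into $q$ as penalty terms so that universal non-negativity of $q$ becomes equivalent to non-negativity of $P$ on the admissible locus.

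The main obstacle is this second step: the additive energy is one rigid kind of linear-form invariant, so expressing the correlation identities polynomially in $(\alpha,\adde)$ requires explicit and varied algebraic constructions. I expect to combine direct products of groups (which make both $\alpha$ and $\adde$ multiplicative across factors) with subsets of sharply controlled Fourier profile—arithmetic progressions, Sidon sets, and subgroups of $\ZZ_N$ or $\mathbb{F}_p^n$—to generate enough independent polynomial relations. Combining this with the Hatami–Norine reduction framework via Theorem~\ref{thm:general-undecidable} will then complete the proof of Theorem~\ref{thm:density-Energy-undecidable}.
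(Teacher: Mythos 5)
The proposal has a fundamental gap at its ``second step.'' You want polynomial identities in the quantities $\alpha(B_i)$ and $\adde(B_i)$ that carve out exactly the tuples arising from a common $A$ via the pullback construction. But $\alpha$ and $\adde$ are invariants of each set \emph{in isolation}: the $2k$ numbers $(\alpha(A_1),\adde(A_1),\ldots,\alpha(A_k),\adde(A_k))$ carry no information about whether the $A_i$ are mutually correlated or arose from a single source. The ``admissible locus'' --- the image of the map $A\mapsto(\alpha(B_1(A)),\adde(B_1(A)),\ldots)$ --- is a subset of $[0,1]^{2k}$ that is in general not the zero set of any polynomial system, so no sum-of-squares penalty can vanish precisely on it while dominating $-P$ off it, which is what the equivalence you assert would require. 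The Plancherel identity $\adde(B)=\sum_\chi|\widehat{1_B}(\chi)|^4$ does not rescue this: individual Fourier coefficients are indexed by characters and are not recoverable from a finite list of densities and energies; the claim that they are ``density counts of explicit linear-form systems'' conflates Fourier coefficients with averaged counts. You explicitly flag this as ``the main obstacle'' and offer only a wish list (Sidon sets, subgroups, direct products) with no mechanism; as stated, the reduction from Theorem~\ref{thm:general-undecidable} is information-theoretically blocked.

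The paper's proof sidesteps this entirely by \emph{not} routing through Theorem~\ref{thm:general-undecidable}. It exploits that Theorem~\ref{thm:density-Energy-undecidable} quantifies over \emph{independent} pairs $(A_i\subseteq G_i)$, so no cross-correlations need to be enforced at all. The ingredients are: the Fourier-analytic upper bound $\adde(A)\le h(\alpha(A))$ of Lemma~\ref{lem:addenergy-upperbound}, tight exactly at $\alpha\in S=\{1/n : n\in\NN\}$ and achieved by subgroups, which plays the role the Bollob\'as bound played in Theorem~\ref{thm:general-undecidable}; Lemma~\ref{lem:und_p_on_Sk}, the undecidability of $p\ge 0$ on $S^k$, a direct corollary of Matiyasevich; and the penalty construction $q(x,y)=p(x)+M\sum_i(x_i^3-y_i)$ with $M$ chosen so that a calculus argument on $\delta=g-h$ forces the local minima of the auxiliary function $\tilde q$ onto $S^k$, where the energy bound is tight. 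That route replaces the correlation-synthesis you propose with a single, achievable one-variable extremal bound, and I do not see how your approach can be repaired without a qualitatively new idea.
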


These theorems are ultimately based on the classical Matiyasevich’s theorem of undecidability \cite{matiyasevivc2003enumerable}, that is, given a multivariate polynomial with integer coefficients, the problem of determining whether it is non-negative for every assignment of natural numbers to its variables is undecidable. The undecidability already occurs for a finite number of variables in Theorem \ref{thm:general-undecidable} and Theorem \ref{thm:density-Energy-undecidable}, in fact $k=9$ suffices by Jones \cite{jones1982universal_9unknown}, see also \cite{gasarch2021hilbert,sun2021further}.
Technically, we need to construct certain maps that build correspondence between densities and natural numbers, with respect to related polynomials in question.

\section{Proof of The First Theorem}

\subsection{Two Lemmas}

%Let $R$ be the region as defined in Section 5 in \cite{hatami2011undecidability}. For completeness, we record the definition here. 
%(corresponding to $L(x)$ in \cite{hatami2011undecidability})

Let 
    $I_t = [1-\frac{1}{t}, 1-\frac{1}{t+1})$
be sub-intervals of $[0,1]$ for 
    $t = 1, 2, 3, \ldots$.
Define 
\[
    h(x) := \frac{3 t^2 - t - 2}{t(t+1)} x - \frac{2(t-1)}{t+1}, \qquad x \in I_t.
\]
Let $h(1) = 1$. Note that $h(x)$ is piecewise linear on $[0,1]$, and $h(1-\frac{1}{t}) = \frac{(t-1)(t-2)}{t^2}$. The function $h(x)$ comes from Bollob{\'a}s lower bound of the triangle homomorphism density in terms of the edge density,  specifically, the following lower bound between homomorphism densities holds for every graph $G$,
\begin{equation}    \label{eq:Bollobas}
    t(K_3,G) \ge h(t(K_2,G)).
\end{equation}
see detail in \cite[Section 5]{hatami2011undecidability}.
Let
\begin{equation}    \label{eq:R_for_tLA}
    R := \{(x,y) \in [0,1]^2  : y \ge h(x)\}.
\end{equation}
Then, Bollob{\'a}s lower bound implies that
    $(t(K_2,G), t(K_3,G)) \in R$.
%and it is easy to check that $h(x)$ is continuous and increasing, and $h(1-\frac{1}{t}) = \frac{(t-1)(t-2)}{t^2}$.

The first lemma is a consequence of Lemma 5.1 and Lemma 5.4 in \cite{hatami2011undecidability}.
\begin{lemma}  \label{lem:q-undecidable}
The following problem is undecidable.
\begin{itemize}
\item INSTANCE: A positive integer $k \ge 6$, and a polynomial $q(x_1, \ldots, x_k, y_1, \ldots, y_k) \\ \in \ZZ[x_1, \ldots, x_k, y_1, \ldots, y_k]$.

\item QUESTION: Does the inequality $q(x_1, \ldots, x_k, y_1, \ldots, y_k) \ge 0$ hold for all $(x_1, \ldots, x_k, y_1, \ldots, y_k)$ with $(x_i, y_i) \in R$ for every $1 \le i \le k$?
\end{itemize}
\end{lemma}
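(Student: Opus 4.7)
The plan is to reduce from Matiyasevich's theorem \cite{matiyasevivc2003enumerable} in Jones's variant \cite{jones1982universal_9unknown}: given a polynomial $P \in \ZZ[n_1, \ldots, n_k]$ with $k \ge 6$, it is undecidable whether $P(n_1, \ldots, n_k) \ge 0$ holds for every $(n_1, \ldots, n_k) \in \NN^k$. The geometric fact I would exploit is that the corner points $(1 - 1/n, (n-1)(n-2)/n^2)$ of $R$ all lie on the convex parabola $y = 2x^2 - x$, and $h$ is precisely the piecewise linear interpolant joining them. Consequently $h(x) \ge 2x^2 - x$ on $[0, 1]$, with equality exactly at these corner points, so
\[
\phi(x, y) := y - 2x^2 + x
\]
is a polynomial with $\phi \ge 0$ on $R$ and $\phi = 0$ on $R$ exactly at corner points.

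Given $P$, let $d_i := \deg_{n_i} P$. I would convert $P$ into a polynomial $\tilde P \in \ZZ[x_1, \ldots, x_k]$ by substituting $n_i = 1/(1-x_i)$ and clearing denominators:
\[
\tilde P(x_1, \ldots, x_k) := \prod_{i=1}^k (1-x_i)^{d_i} \cdot P\!\left(\tfrac{1}{1-x_1}, \ldots, \tfrac{1}{1-x_k}\right).
\]
At a corner tuple $x_i = 1 - 1/n_i$, one computes $\tilde P = P(n_1, \ldots, n_k)/\prod_i n_i^{d_i}$, which has the same sign as $P(n_1, \ldots, n_k)$; thus $P \ge 0$ on $\NN^k$ if and only if $\tilde P \ge 0$ at every corner tuple of $R^k$. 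The instance output by the reduction is then of the form
\[
q(x_1, y_1, \ldots, x_k, y_k) := \tilde P(x_1, \ldots, x_k) + N \sum_{i=1}^k \phi(x_i, y_i),
\]
for an integer $N$ chosen sufficiently large in terms of $P$.

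Correctness splits into two directions. The easy one is the reverse: if $P(n^\ast) < 0$ at some $n^\ast \in \NN^k$, evaluating $q$ at the corresponding corner tuple (where every $\phi_i = 0$) yields $q = \tilde P(n^\ast) < 0$, so the polynomial inequality on $R^k$ fails. The main obstacle is the forward direction: assuming $P \ge 0$ on $\NN^k$, one must show $q \ge 0$ on all of $R^k$ for $N$ large enough. This is delicate because $\tilde P$ may dip below zero at non-corner points, while $\phi_i$ can be made arbitrarily small near corners. The required estimate — a uniform upper bound on $|\tilde P|/\sum_i \phi_i$ over $R^k$ minus the corner tuples — combines a compactness argument away from corners (where $\sum_i \phi_i$ is bounded below by a positive constant) with a local analysis near each corner, where both $\tilde P$ and $\phi$ vanish with matching linear rates governed by the slopes of $h$ on adjacent intervals $I_t$. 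This quantitative control on the geometry of $R$ is essentially the content of Lemma~5.1 in \cite{hatami2011undecidability}, while the explicit construction of $q$ and the verification of this equivalence form the core of Lemma~5.4 there. Our statement then follows by restating their reduction purely in terms of the region $R$, with the $k$ copies of $R$ corresponding to the $k$ independent corner parameters encoding $(n_1, \ldots, n_k)$.
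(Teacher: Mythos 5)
The paper itself proves this lemma by a single-line citation to Lemmas~5.1 and~5.4 of Hatami--Norine, and your proposal takes exactly the same route, while additionally reconstructing a plausible outline of what those cited lemmas contain: the observation that the corners of $R$ lie on the convex parabola $y=2x^2-x$ (so $\phi(x,y)=y-2x^2+x$ is a polynomial penalty that is nonnegative on $R$ and vanishes there exactly at the corners), the rational substitution producing $\tilde P$, and the instance $q=\tilde P + N\sum_i\phi(x_i,y_i)$. Since both you and the paper ultimately defer the quantitative step to the same reference, this is essentially the same approach, and the outline you give is the right one.

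One caution on your sketch of the forward direction, for your own benefit. The corners $(1-\tfrac1t,\tfrac{(t-1)(t-2)}{t^2})$ accumulate at $(1,1)$, so "a compactness argument away from corners, where $\sum_i\phi_i$ is bounded below" does not literally work near $x_i\to 1$, and a uniform bound on the ratio $|\tilde P|/\sum_i\phi_i$ is delicate there. A cleaner way to run the forward direction is the curvature argument the paper itself uses in its proof of Theorem~\ref{thm:density-Energy-undecidable}: after minimizing over $y_i$ (which sets $y_i=h(x_i)$), let $\delta(x)=h(x)-(2x^2-x)$; then $\delta''(x)=-4$ on each interval $I_t$, so once $N$ exceeds $\tfrac14\sup_{[0,1]^k}|\partial^2\tilde P/\partial x_i^2|$ the function $\tilde P(x)+N\sum_i\delta(x_i)$ is strictly concave in each coordinate on every cell $\prod_i I_{t_i}$, hence attains its minimum on such a cell at a corner tuple, where it equals $\tilde P\ge 0$. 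This dispenses with the ratio estimate and handles the accumulation at $x_i=1$ automatically.
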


Let  $\g = (g_1, \ldots, g_k)$ denote a  $k$-tuple of formal variables. Define a   system of linear forms 
\begin{equation}   \label{eqn:mcL}
    L(\g) := \{\neg (k+1)g_1\} \cup \left( \bigcup_{p=1}^{k+2} \bigcup_{j=2}^k \{p(g_j - j g_1)\}\right).
\end{equation}
Here, the notation $\neg (k+1)g_1$ means that when we consider $L \in A$ we are asking for $(k+1)g_1 \not\in A$. Define two related systems
\begin{align}
    M(\g) &:= L(\g) \cup \{g_1, \ldots, g_k\}, \label{eqn:mcM} \\
    L(\g, j, z) &:= L(g_1, \ldots, g_{j-1}, z, g_{j+1}, \ldots, g_k), \quad j=1,\ldots, k. \label{eqn:mcL-z}
\end{align}

\begin{lemma}  \label{lem:pinpoint}
Let $S = \{0, 1, \ldots, k\} \subseteq G = \ZZ_{(k+1)^2}$, let $\g = (g_1, \ldots, g_k) \in G^k$, then
\begin{itemize}
  \item if $L(\g) \in S$, then $g_j = j g_1$ and $g_j \neq 0$ for every $j=1,\ldots,k$;
  \item if $M(\g) \in S$, then $g_j = j$ for every $j=1,\ldots,k$.
\end{itemize}
\end{lemma}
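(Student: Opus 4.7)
The plan is to decompose the two membership conditions and exploit the arithmetic of $\ZZ_{(k+1)^2}$ relative to the size-$(k+1)$ set $S = \{0, 1, \ldots, k\}$.

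For the first bullet, the key step is to force $g_j = jg_1$ for every $j = 2, \ldots, k$. Fix such a $j$, set $d = g_j - jg_1$, and identify $d$ with its representative in $\{0, 1, \ldots, (k+1)^2 - 1\}$. The atom $1 \cdot d \in S$ forces $d \leq k$. If $d \geq 1$, the size estimate $(k+2)d \leq (k+2)k = (k+1)^2 - 1$ shows that $(k+2)d$ does not wrap modulo $(k+1)^2$, so its representative is the integer $(k+2)d \geq k+2$, which lies outside $S$---contradicting the atom $(k+2) \cdot d \in S$. Hence $d = 0$. For the non-vanishing, I turn to the negated atom: writing $g_1 = q(k+1) + r$ with $0 \leq r \leq k$, one computes $(k+1) g_1 \equiv r(k+1) \pmod{(k+1)^2}$, and since the nonzero multiples of $k+1$ below $(k+1)^2$ all exceed $k$, the condition $(k+1) g_1 \notin S$ is equivalent to $r \neq 0$. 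This, together with the divisor structure of $(k+1)^2$, should force the order of $g_1$ to exceed $k$, so that $jg_1 \neq 0$ for every $j = 1, \ldots, k$.

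For the second bullet, $M(\g) \in S$ adds the atoms $g_i \in S$ for every $i$. Combined with $g_j = jg_1$ from the first bullet, this gives $jg_1 \in S$ for each $j$; taking $j = k$, and noting $kg_1 \leq k^2 < (k+1)^2$ (which rules out wrap-around once $g_1 \in S$), forces $g_1 \in \{0, 1\}$. The negated atom then rules out $g_1 = 0$, leaving $g_1 = 1$ and $g_j = j$ for every $j$.

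The main obstacle is the non-vanishing step in the first bullet. The identity $g_j = jg_1$ is a clean pigeonhole-plus-size argument, and the second bullet reduces to a short dictionary lookup once the first is in hand; the delicate point is ensuring, solely from $(k+1)g_1 \notin S$ (equivalently $r \neq 0$), that $g_1$ has order exceeding $k$ in $\ZZ_{(k+1)^2}$. Ruling out elements whose order divides $(k+1)^2$ and is at most $k$ but does not divide $k+1$ is subtle---for instance, when $k+1$ is not a prime power, $(k+1)^2$ can admit a divisor $n$ with $n \leq k$ yet $n \nmid k+1$---so some further careful arithmetic, perhaps leveraging constraints in $L(\g)$ that I have not yet used, will be needed to complete this step cleanly.
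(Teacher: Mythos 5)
Your derivation of $g_j = jg_1$ is correct and is a slight streamlining of the paper's argument: you invoke only the two atoms $p=1$ and $p=k+2$ together with the size bound $(k+2)k = (k+1)^2 - 1$ (so $(k+2)\delta_j$ does not wrap), whereas the paper first pigeonholes all $k+2$ multiples of $\delta_j$ in the $(k+1)$-element set $S$ to extract $q_j\delta_j = 0$ for some $1 \le q_j \le k+1$, and then finishes with essentially the same size estimate. Your treatment of the second bullet likewise agrees with the paper's one-line deduction from the extra constraints $g_j = jg_1 \in S$.

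Your suspicion about the non-vanishing step is well founded --- and you should stop looking for a way to close it, because the first bullet is false as stated. Take $k = 5$, so $G = \ZZ_{36}$ and $S = \{0,\ldots,5\}$; put $g_1 = 9$ and $g_j \equiv 9j \pmod{36}$ for $j = 2,\ldots,5$. Then $\delta_j = g_j - jg_1 = 0$ for every $j$, so all the atoms $p\delta_j$ lie in $S$, and $(k+1)g_1 = 54 \equiv 18 \notin S$; hence $L(\g) \in S$, yet $g_4 = 36 \equiv 0$. More generally such a counterexample exists whenever $(k+1)^2$ has a divisor $d$ with $k+3 \le d < (k+1)^2$ and $(k+1) \nmid d$, which happens exactly when $k+1$ is not a prime power (e.g.\ already $k=9$: take $g_1 = 25$ in $\ZZ_{100}$, so $g_4 = 0$). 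The paper's own proof in fact only derives $g_1 \neq 0$ from the negated atom, not $g_j \neq 0$ for $j \ge 2$, and only the second bullet ($M(\g) \in S \Rightarrow g_j = j$) is ever used downstream in Theorem~\ref{thm:general-undecidable}. So the correct fix is not ``further careful arithmetic'' but simply to weaken the first bullet to $g_1 \neq 0$ (or drop the non-vanishing claim entirely, as it is never invoked).
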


\begin{proof}
The condition $(k+1)g_1 \notin S$ implies $g_1 \neq 0$. For each $\delta_j = g_j - j g_1$, since we require $\delta_j, 2 \delta_j, \ldots, (k+2)\delta_j$ all lie in $S$, but $S$ contains only $k+1$ distinct elements, by the pigeonhole principle two of them must be equal and hence we have
    $q_j \delta_j = 0$ for some $1 \le q_j \le k+1$.
Since $\delta_j \in S = \{0,1,\ldots,k\}$, we then must have
    $\delta_j = 0$, i.e., $g_j = j g_1$.
The second claim for $M(\g) \in S$ follows from the extra constraints $g_j = jg_1 \in S$ for every $j$. 
\end{proof}

\subsection{Proof of Theorem \ref{thm:general-undecidable}}
%For a given polynomial $q$ as in Lemma \ref{lem:q-undecidable}, we will prove that there exists one subset $A$ \yl{is it correct to say exists some A, isn't it for all A?} in an appropriate finite abelian group $G$ such that $q \ge 0$ for all $(x_1, \ldots, x_k, y_1, \ldots, y_k)$ with $(x_j, y_j) \in R$ is equivalent to that $t(\psi(q^*), A) \ge 0$ where $\psi(q^*)$ is a properly defined   system of linear forms related to polynomial $q$. If this is shown to be true, then Lemma \ref{lem:q-undecidable} implies Theorem \ref{thm:general-undecidable}. 

Let $G$ denote a finite abelian group. Given two subsets $B, C \subseteq G$, define an associated \emph{directed} graph $U$ where $V(U) = B$, and for $b_1, b_2 \in B$, there is a directed edge $(b_1, b_2) \in E(U)$ if and only if $b_1 - b_2 \in C$.

Let $A \subseteq G$, let $\g = (g_1, \ldots, g_k) \in G^k$. Define
\[
    \mcV_j(\g,z) := M(\g) \cup L(\g, j, z), \quad j=1,\ldots, k.
\]
Define 
\begin{align}   \label{eq:B_C}
    B_j (\g) &= \{z \in G : \mcV_j(\g,z)  \in A\} \subseteq G,  \\
    C_j(\g) &= (B_j(\g) \cap A) - g_j  \subseteq G.
\end{align}
Then construct the associated directed graph $U_j(\g)$ from $B_j(\g)$ and $C_j(\g)$ accordingly.  For $j=1, \ldots, k$, define 
\[
    \mcE_j(\g, z,z') := M(\g) \cup L(\g, j, z) \cup L(\g, j, z') \cup L(\g, j, g_j+ z - z') \cup \{g_j+ z - z'\}.
\]
Intuitively, the system of linear forms $\mcE_j$ is to count the edges in the directed graph $U_j(\g)$. Define $\mcT_j$ similar to $\mcE_j$ but to count the triangles, i.e., 
\begin{align*}
    \mcT_j(\g, z, z', z'') 
    &:= M(\g) \cup L(\g, j, z) \cup L(\g, j, z') \cup L(\g, j, z'')  \\
    & \cup L(\g, j, g_j+ z - z') \cup \{g_j+ z - z'\} \\
    & \cup L(\g, j, g_j+ z' - z'') \cup \{g_j+ z' - z''\} \\
    & \cup L(\g, j, g_j+ z'' - z) \cup \{g_j+ z'' - z\}.
\end{align*}

For the polynomial $q$ as in Lemma \ref{lem:q-undecidable}, define a related polynomial $q^*$ with integral coefficients as follows
\[
    q^*(v_1, \ldots, v_k, e_1, \ldots, e_k, t_1, \ldots, t_k) := q\left(\frac{e_1}{v_1^2}, \ldots, \frac{e_k}{v_k^2}, \frac{t_1}{v_1^3}, \ldots,\frac{t_k}{v_k^3} \right) \prod_{j=1}^k v_j^{3\deg(q)}.
\]
Finally, define the quantum system of linear forms
\[
    \psi(q^*) := q^*(\mcV_1,\ldots,\mcV_k, \mcE_1,\ldots, \mcE_k, \mcT_1, \ldots, \mcT_k).
\]

\begin{proof}[Proof of Theorem \ref{thm:general-undecidable}]
    By Lemma \ref{lem:q-undecidable}, it suffices to show that $q(x,y) \ge 0$ for $(x,y) \in R^k$ is equivalent to $t(\psi(q^*), A) \ge 0$ for every $A \subseteq G$ and for every $G$. 

    We first show 
        $q(x,y) \ge 0$ implies $t(\psi(q^*), A) \ge 0$. 
    Note that for those $\g \in G^k$ such that $M(\g) \notin A$, we have 
        $t(\mcV_j(\g,z), A) = t(\mcE_j(\g,z,z'), A) = t(\mcT_j(\g,z,z', z''), A)=0$. 
    On the other hand, for those $\g$ such that $M(\g) \in A$, we have
    \begin{equation}    \label{eq:density_to_homdensity}
        t(K_2, U_j(\g)) = \frac{t(\mcE_j(\g, z,z'), A)}{t(\mcV_j(\g, z), A)^2},
        \quad
        t(K_3, U_j(\g)) = \frac{t(\mcT_j(\g, z,z',z''), A)}{t(\mcV_j(\g, z), A)^3}.
    \end{equation}
    To verify the equality, as an example, we have
    \begin{align*}
        t(\mcE_j(\g,z,z'), A) 
        &= \Prob[g_j + z-z' \in B_j(\g) \cap A, z \in B_j(\g), z' \in B_j(\g)]  \\
        &= \Prob[z-z' \in C_j(\g) \ |\  z, z' \in B_j(\g)] \cdot \Prob[z\in B_j(\g)]^2  \\
        &= t(K_2, U_j(\g)) \cdot t(\mcV_j(\g, z), A)^2.
    \end{align*}
    One can verify the other equality similarly. This implies,
    \begin{equation}    \label{eq:compute_tqstar}
        t(\psi(q^*), A) =
        \begin{cases}
        q(t(K_2, U_1(\g)), \ldots, t(K_2, U_k(\g)), &\ \\
        \quad t(K_3, U_1(\g)), \ldots, t(K_3, U_k(\g))) &\ \\
        \quad \cdot \prod_{j=1}^k t(\mcV_j(\g,z), A)^{3\deg(q)}, &\quad M(\g) \in A; \\
        0, &\quad M(\g) \notin A.
        \end{cases}
    \end{equation}
    Therefore, since by Bollob{\'a}s lower bound we have 
        $(t(K_2,G), t(K_3,G)) \in R$
    holds for every graph $G$, the claim follows.

    Next, we show $q(x,y) < 0$ for some $(x,y) \in R^k$ implies $t(\psi(q^*), A) < 0$ for some $A\subseteq G$ for some finite abelian group $G$. 
    By the analysis in \cite[Lemma 5.4]{hatami2011undecidability}, we know that there in fact exists some 
        $(x^*_j = 1-\frac{1}{n_j}, y^*_j=2 x_j^2 - x_j) \in R$
    in which $n_j\in \NN$ for every $j \in [k]$, 
    such that 
        $q(x^*, y^*) < 0$.

    Let $H$ be a finite abelian group with subgroups $H_1, \ldots, H_k$ satisfying 
        $\frac{|H_j|}{|H|} = \frac{1}{n_j}$ for every $j \in [k]$. 
    Note that such group and subgroups exist by, e.g., considering direct product of cyclic groups.
    In addition, let 
        $H_0 = \emptyset$. 
    Let 
        $G = \ZZ_{(k+1)^2} \times H$, 
    and
        $S=\{0,1, \ldots, k\} \subseteq \ZZ_{(k+1)^2}$. 
    Consider a subset $A \subseteq G$ defined as follows
    \[
        A := \bigcup_{j\in S} (j \times (H-H_j)).
    \]
    By Lemma \ref{lem:pinpoint},  if $\g = (g_1, \ldots, g_k) \in G$ satisfies $M(\g) \in A$, then, 
        $g_j = (j, h) \in j \times (H-H_j)$.
    By the definition \eqref{eq:B_C}, we have for $j=1, \ldots, k$,
    \begin{equation}    \label{eq:compute_B_C}
        B_j(\g) = j \times H, \quad C_j(\g) = (j \times (H-H_j)) - g_j = 0 \times ((H-H_j) - h).
    \end{equation}
    As an example, we show $B_1(\g) = 1 \times H$. We already know for every $j$, the first component in $g_j \in G$ equals to $j$. Since for $B_1(\g)$, its element 
        $z = (z_1, z_2) \in B_1(\g) \subseteq G$
    plays the role of $g_1$ in the constraints given by the linear forms $L$, we will have
        $((k+1)z_1,(k+1)z_2) \not\in A$.
    Since $H-H_0 = H$, 
        $(k+1)z_2 \in H-H_0$
    always holds, hence, we must have
        $(k+1)z_1 \neq 0$,
    which implies
        $z_1 \neq 0$.
    Furthermore, the other constraints says that
        $p(g_j - j z) \in A$
    for every $j\ge 2$ and for $p=1, \ldots, k+2$.
    If we focus on $z_1$, we would have
        $p(j - j z_1) \in S$
    for every $j\ge 2$ and for $p=1, \ldots, k+2$. 
    This implies 
        $z_1 = 1$.
    But now we have
        $(k+1)z_1 = k+1 \not\in S$, 
    hence, the value $z_2$ could be arbitrary. In other words,
        $B_1(\g) = 1 \times H$.
    The rest can be verified similarly.
    
    The condition \eqref{eq:compute_B_C} implies that if $z,z' \in B_j(\g)$, then we have
        $z = (j, a)$ and $z' = (j,b)$
    for some $a,b \in H$, and
        $z-z' = (0, a-b)$. 
    Hence,
    \begin{align*}
        \frac{t(\mcE_j(\g, z, z'), A)}{t(\mcV_j(\g,z), A)^2} = t(K_2, U_j(\g)) &= \Prob[z - z' \in C_j(\g) \ |\ z, z' \in B_j(\g)] \\
        &= \Prob[a - b \in H-H_j - h \ |\ a, b \in H] \\
        &= \frac{|H-H_j|}{|H|} = 1-\frac{1}{n_j} = x^*_j.
    \end{align*}
    Similarly, we have
        $\frac{t(\mcT_j(\g,z,z',z''), A)}{t(\mcV_j(\g,z), A)^3} 
        = t(K_3, U_j(\g))
        = 2(1-\frac{1}{n_j})^2 - (1-\frac{1}{n_j}) = y^*_j$.
    Hence, by \eqref{eq:compute_tqstar}
    \[
        t(\psi(q^*), A) = q(x^*, y^*) \cdot \prod_{j=1}^k t(\mcV_j(\g,z), A)^{3\deg(q)}< 0.
    \]
%Since we know that when $\g \in G$ satisfies $M(\g) \notin A$ we have $t(\psi(q^*), A) = 0$, we must have $t(\psi(q^*), A) < 0$ as desired.
\end{proof}

%%%%%%%%%%%%%%%%%%%%%%%%%%%%%%%%%%%%%%%%%%%%%%%%%%%%%%%%%%%%%%%%%%%%%%%%%%%%%%
%%%%%%%%%%%%%%%%%%%%%%%%%%%%%%%%%%%%%%%%%%%%%%%%%%%%%%%%%%%%%%%%%%%%%%%%%%%%%%

\section{Proof of The Second Theorem}   \label{sec:Pf_Thm2}

The proof of Theorem \ref{thm:general-undecidable} relies on the Bollob{\'a}s lower bound \eqref{eq:Bollobas} of the triangle homomorphism density in terms of the edge density. Now, to study polynomial inequalities involving subset's density and additive energy, it would be useful to establish a similar bound. 

\subsection{An Upper Bound of Additive Energy}  \label{sec:uppbound_addE}

We review some notation of Fourier analysis on a finite abelian group $G$. For every $\xi \in G$, let $\chi_\xi : G \to \CC$ be the characters on $G$ defined as $\chi_\xi(x) = e^{2\pi i \xi x}$. Given two functions $f, g \in L^2(G)$, their convolution $f*g$ is defined as usual $(f*g)(x) := \expect_{y \in G} f(x-y)g(y)$  where the expectation is  taken over $y$ uniformly distributed in $G$. The Fourier transform $\hat{f}: G \to \CC$ of $f$ is defined as $\hat{f}(\xi) := \expect_{x\in G} f(x) \overline{\chi_\xi(x)}$. The $L_2$ norm of $f$ is defined as $\norm{f} = (\expect_{x\in G} |f(x)|^2)^{1/2}$.

For a subset $A \subseteq G$, let $A(x) := 1_A (x)$ be the indicator function for subset $A$. Recall the notation $r_A (x)$ as defined in \eqref{eq:def_r_Ax}. By definition, we have 
    $\frac{r_A(x)}{|G|} = (A*A)(x)$. 
For the additive energy, we have
    $\adde (A) = \sum_{x \in G} r_A (x)^2$.
Hence,
\begin{align*}
    \adde (A) 
    &= \sum_{x \in G} r_A (x)^2 = |G|^2 \sum_{x \in G} (A*A)(x)^2 = |G|^3 \expect_{x \in G} (A*A)(x)^2 \\
    &= |G|^3 \norm{(A*A)(x)}^2.
\end{align*}
Therefore, if we normalize $\adde(A)$ we get $\frac{\adde(A)}{|G|^3} = \norm{(A*A)(x)}^2$. From now on, when we use $\adde(A)$ we mean this normalized version. We now establish the following lemma which bounds the additive energy of a subset $A \subseteq G$ by its density. 

\begin{lemma}  \label{lem:addenergy-upperbound} 
Let $G$ be a finite abelian group, let $A\subseteq G$ be a non-empty subset with density $\alpha = \frac{|A|}{|G|}$, then the (normalized) additive energy $\adde(A)$ satisfies
\begin{equation}   \label{eqn:addenergy-upperbound}
    \adde(A) \le \alpha^3 - \alpha^4 \left(\fractional{\frac{1}{\alpha}} - \fractional{\frac{1}{\alpha}}^2 \right),
\end{equation}  
where 
    $\fractional{\frac{1}{\alpha}} = \frac{1}{\alpha} - \floor{\frac{1}{\alpha}}$ 
denotes the fractional part of $\frac{1}{\alpha}$.
In particular, $\adde(A) \le \alpha^3$ when $\frac{1}{\alpha} \in \NN$, i.e., when $|A|$ divides $|G|$.
\end{lemma}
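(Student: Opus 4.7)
The plan is to pass to Fourier space on $G$ and reduce the claimed inequality to a small convex-optimization problem on the spectrum of $A$. Using the convolution identity $\widehat{A*A}(\xi) = \hat A(\xi)^2$ together with Parseval's identity,
\[
    \adde(A) = \norm{A*A}^2 = \sum_{\xi \in G} \bigl|\widehat{A*A}(\xi)\bigr|^2 = \sum_{\xi \in G} |\hat A(\xi)|^4.
\]
Setting $v_\xi := |\hat A(\xi)|^2$, the sequence $(v_\xi)_{\xi \in G}$ obeys three immediate constraints: $v_0 = \hat A(0)^2 = \alpha^2$; $\sum_\xi v_\xi = \expect_x A(x)^2 = \alpha$ by Parseval; and $0 \le v_\xi \le \alpha^2$ by the triangle inequality $|\hat A(\xi)| \le \hat A(0) = \alpha$. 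Therefore $\adde(A) = \sum_\xi v_\xi^2$ is dominated by the maximum of $\sum v_\xi^2$ over all non-negative sequences indexed by $G$ that satisfy these three constraints.

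Because $v \mapsto v^2$ is convex, this maximum is attained at a vertex of the feasible polytope, and at any vertex every coordinate $v_\xi$ with $\xi \neq 0$ takes a boundary value ($0$ or $\alpha^2$) except for at most one ``fractional'' coordinate $u$ used to balance the sum constraint. If $m$ denotes the number of $v_\xi$ with $\xi \neq 0$ equal to $\alpha^2$, the sum constraint forces $u = \alpha - (m+1)\alpha^2$. Writing $1/\alpha = n + \theta$ with $n = \floor{1/\alpha}$ and $\theta = \fractional{1/\alpha}$, the conditions $0 \le u \le \alpha^2$ together with $m \in \ZZ_{\ge 0}$ pin down $m = n - 1$ and $u = \theta \alpha^2$ (when $\theta = 0$ the alternative $m = n - 2$, $u = \alpha^2$ yields the same objective). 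Substituting,
\[
    \adde(A) \le (m+1)\alpha^4 + u^2 = n\alpha^4 + \theta^2 \alpha^4 = \alpha^4(n + \theta^2) = \alpha^3 - \alpha^4 \theta(1 - \theta),
\]
using $n = 1/\alpha - \theta$ in the final step; this is the claimed bound, and when $|A|$ divides $|G|$, so $\theta = 0$, it collapses to $\alpha^3$.

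I do not anticipate any serious obstacle beyond routine bookkeeping. The only point to verify is that the polytope actually contains the proposed extremal configuration, which requires $|G| \ge n + 1$ when $\theta > 0$ (one slot for $v_0$, another $n - 1$ full slots at $\alpha^2$, and the fractional slot). This is immediate: $\theta > 0$ forces $|A| \ge 2$, whence $n = \floor{|G|/|A|} \le |G|/2 < |G|$.
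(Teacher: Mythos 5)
Your proof is correct and takes essentially the same route as the paper: Parseval converts $\adde(A)$ to $\sum_{\xi}|\hat{A}(\xi)|^4$, and the claimed bound is obtained by maximizing this sum subject to $\sum_\xi |\hat A(\xi)|^2 = \alpha$ and $0 \le |\hat A(\xi)|^2 \le \alpha^2$. Your justification of the optimization step --- convexity of $\sum v_\xi^2$ forces the maximum to a vertex of the feasible polytope, where at most one coordinate is fractional --- is somewhat cleaner and more rigorous than the paper's terse appeal to the Karush--Kuhn--Tucker conditions, but the underlying argument is the same.
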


\begin{proof}
Using $A(x) = 1_A (x)$, we have $\norm{A(x)}^2 = \alpha$, hence by Parseval's identity,
\begin{equation}\label{eqn:sum-of-Fourier-term}
    \sum_{\xi \in G} |\hat{A}(\xi)|^2 = \norm{A(x)}^2 = \alpha,
\end{equation} 
in which 
    $|\hat{A}(\xi)| = \frac{1}{|G|} \left| \sum_{x \in G} A(x) \overline{\chi_\xi(x)} \right| \le \frac{|A|}{|G|}= \alpha$, 
hence,
\begin{equation} \label{eqn:Fourier-term-bound}
    |\hat{A}(\xi)|^2 \le \alpha^2, \quad \forall\ \xi \in G.
\end{equation} 
Since $\adde(A) = \norm{(A*A)(x)}^2$, applying Parseval's identity another time we have
\begin{equation} \label{eqn:addenergy-A}
    \adde(A) = \norm{(A*A)(x)}^2 = \sum_{\xi \in G} |\widehat{A*A}(\xi)|^2 = \sum_{\xi \in G} |\hat{A}(\xi) \cdot \hat{A}(\xi)|^2 = \sum_{\xi \in G} |\hat{A}(\xi)|^4.
\end{equation} 
To get an upper bound of $\adde(A)$ is equivalent to maximize $\adde(A)$ in the form of \eqref{eqn:addenergy-A} under the constraints \eqref{eqn:sum-of-Fourier-term} and \eqref{eqn:Fourier-term-bound}. The Karush–Kuhn–Tucker conditions imply that $\adde(A)$ will be maximized when there are as many $|\hat{A}(\xi)| = \alpha$ as possible, which gives 
\begin{align*} 
    \adde(A) = \sum_{\xi \in G} |\hat{A}(\xi)|^4
	&\le \floor{\frac{1}{\alpha}} \cdot \alpha^4 + \left( \alpha -  \floor{\frac{1}{\alpha}} \cdot \alpha^2 \right)^2 \\
	&= \floor{\frac{1}{\alpha}} \cdot \alpha^4 + \left( \frac{1}{\alpha} -  \floor{\frac{1}{\alpha}} \right)^2 \cdot \alpha^4 \\
	&= \floor{\frac{1}{\alpha}} \cdot \alpha^4 + \fractional{\frac{1}{\alpha}}^2 \cdot \alpha^4.
\end{align*} 
This simplifies to \eqref{eqn:addenergy-upperbound} after a direct calculation.
\end{proof}

Consider $A = \{0\} \subseteq \ZZ_n$, we find that $\alpha(A) = \frac{1}{n}$ and $\adde(A) = \frac{1}{n^3}$, hence the upper bound is tight when $\frac{1}{\alpha} \in \NN$.

\subsection{Proof of Theorem \ref{thm:density-Energy-undecidable}} \label{sec:Pfdetail_Thm2}

We first prove a simple lemma. Let 
    $S= \{\frac{1}{n}: n\in \NN \} \subseteq (0, 1]$.

\begin{lemma}   \label{lem:und_p_on_Sk}
The following problem is undecidable.
\begin{itemize}
  \item INSTANCE: A positive integer $k \ge 2$, and a polynomial $p(x) = p(x_1, \ldots, x_k) \in \ZZ[x_1, \ldots, x_k]$.
  \item QUESTION: Does $p(x_1, \ldots, x_k) \ge 0$ hold for every $(x_1, \ldots, x_k) \in S^k$?
\end{itemize}
\end{lemma}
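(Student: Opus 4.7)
The plan is to reduce directly from Matiyasevich's theorem via the bijection $n \leftrightarrow 1/n$ between $\NN$ and $S$. Given an instance of the undecidable natural-numbers problem --- a polynomial $P(n_1, \ldots, n_k) \in \ZZ[n_1, \ldots, n_k]$, which the excerpt already notes is undecidable for the fixed value $k = 9$ by Jones --- I would let $d_i$ denote the maximum degree in which $n_i$ appears in $P$, and set
\[
    p(x_1, \ldots, x_k) := x_1^{d_1} \cdots x_k^{d_k} \cdot P\!\left(\tfrac{1}{x_1}, \ldots, \tfrac{1}{x_k}\right).
\]
The monomial prefactor is chosen exactly to clear every negative power of $x_i$ produced by the substitution, so $p \in \ZZ[x_1, \ldots, x_k]$, and the construction is evidently computable from $P$.

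The key observation is that $n \mapsto 1/n$ is a bijection $\NN \to S$, so $(n_1, \ldots, n_k) \mapsto (1/n_1, \ldots, 1/n_k)$ is a bijection $\NN^k \to S^k$. At corresponding points,
\[
    p(1/n_1, \ldots, 1/n_k) = n_1^{-d_1} \cdots n_k^{-d_k} \cdot P(n_1, \ldots, n_k),
\]
and the scalar prefactor $\prod_i n_i^{-d_i}$ is strictly positive since $n_i \ge 1$. Hence the sign of $p$ at any point of $S^k$ agrees with the sign of $P$ at the corresponding point of $\NN^k$, and consequently $p \ge 0$ on $S^k$ if and only if $P \ge 0$ on $\NN^k$. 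This is a computable many-one reduction from the natural-numbers problem to the one in the lemma, so the latter inherits undecidability.

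There is no genuine obstacle --- the entire content is the observation that the substitution $x_i = 1/n_i$ converts polynomial inequalities over $\NN^k$ into polynomial inequalities over $S^k$ after clearing denominators, and that clearing denominators by a strictly positive quantity preserves sign. The only item requiring attention is the degree bookkeeping ensuring $p$ is an honest integer polynomial, which is settled by the choice of the $d_i$. The hypothesis $k \ge 2$ in the lemma's instance is comfortable, since undecidability holds already at $k = 9$.
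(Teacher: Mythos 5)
Your reduction is the same as the paper's: substitute $x_i = 1/n_i$ and clear denominators with a strictly positive monomial prefactor so that the sign is preserved, then invoke Matiyasevich's theorem. The only cosmetic difference is that you use per-variable exponents $d_i$ while the paper uses $\deg q$ uniformly; both choices clear denominators and the argument is otherwise identical.
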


\begin{proof}
Let $q(x_1, \ldots, x_k) \in \ZZ[x_1, \ldots, x_k]$, and let
\[
p(x_1, \ldots, x_k) = \left( \prod_{i=1}^k x_i^{\deg q}\right) q(\frac{1}{x_1}, \ldots, \frac{1}{x_k}).
\]
Then $p(x_1, \ldots, x_k) \in \ZZ[x_1, \ldots, x_k]$, and clearly $q(x_1, \ldots, x_k) \ge 0$ for $(x_1, \ldots, x_k) \in \NN^k$ is equivalent to $p(\frac{1}{x_1}, \ldots, \frac{1}{x_k}) \ge 0$ for $(\frac{1}{x_1}, \ldots, \frac{1}{x_k}) \in S^k$. But the former is undecidable by Matiyasevich's theorem, hence the latter is undecidable too.
\end{proof}

We are now ready to prove Theorem \ref{thm:density-Energy-undecidable}.

\begin{proof}[Proof of Theorem \ref{thm:density-Energy-undecidable}]
Let 
    $p(x_1, \ldots, x_k) \in \ZZ[x_1, \ldots, x_k]$. 
Choose
\begin{align*}
    M &= \max \Bigg\{1,
    30\max\left\{\norm{\left( \frac{\partial p}{\partial x_1}, \ldots, \frac{\partial p}{\partial x_k} \right)}_\infty: x\in [0,1]^k \right\}, \\
    &3\max\left\{ \norm{\left(\frac{\partial^2 p}{\partial x_1^2}, \ldots, \frac{\partial^2 p}{\partial x_k^2} \right)}_\infty: x\in [0,1]^k \right\}
    \Bigg\} > 0.
\end{align*}

Let $g(\alpha) = \alpha^3$. Let 
    $h(\alpha)= \alpha^3 - \alpha^4 (\fractional{\frac{1}{\alpha}} - \fractional{\frac{1}{\alpha}}^2)$ 
for 
    $\alpha \in (0, 1]$,
where $\fractional{\frac{1}{\alpha}} = \frac{1}{\alpha} - \floor{\frac{1}{\alpha}}$ denotes the fractional part of $\frac{1}{\alpha}$.
Let $h(0)=0$. 
Let 
    $\delta(\alpha) = g(\alpha) - h(\alpha) \ge 0$. 
Note that
    $\delta(\alpha) = 0$ 
or equivalently $h(\alpha) = g(\alpha)$ if $\alpha \in S$.
Define a region
\begin{equation}    \label{eq:R_for_Thm2}
    R := \{(\alpha,\beta)\in [0,1]^2: \beta \le h(\alpha)\}.
\end{equation}

Define
\[
    q(x_1, \ldots, x_k, y_1, \ldots, y_k) := p(x_1, \ldots, x_k) + M \sum_{i=1}^k (g(x_i) - y_i).
\]
We also use the abbreviation $(x, y) = (x_1, \ldots, x_k, y_1, \ldots, y_k)$.

We first show that the following are equivalent: 
(i) $p(x) \ge 0$ for all $x \in S^k$, 
(ii) $q(x,y) \ge 0$ for all $(x, y) \in R^k$.

To show (ii) implies (i), note that if $x \in S^k$, i.e., if $x_i \in S$ we have $(x_i,g(x_i)) \in R$, hence letting $y_i = g(x_i)$, we have
    $p(x) = q(x,y) \ge 0$
as desired.
    
Next we show (i) implies (ii). Consider an auxiliary function 
\[
    \tilde{q}(x_1, \ldots, x_k) := p(x_1, \ldots, x_k) + M \sum_{i=1}^k \delta(x_i).
\]
Since $M > 0$ and when $(x_i,y_i) \in R$ we have $g(x_i) - y_i \ge \delta(x_i)$, hence    
    $q(x,y) \ge \tilde{q}(x)$ holds for $(x,y) \in R^k$.
Hence, it suffices to show $\tilde{q}(x) \ge 0$ for $(x,y) \in R^k$. 
Let 
    $I_t = [\frac{1}{1+t}, \frac{1}{t}]$, $t\in \NN$.
For $\alpha \in I_t$, we have 
    $\fractional{\frac{1}{\alpha}} = \frac{1}{\alpha} - t$.
Hence,
\[
    \delta(\alpha) = -t(t+1)\alpha^4 + (2t+1)\alpha^3 - \alpha^2, \quad \alpha \in I_t.
\]
We have 
    $\delta'(\alpha) = -4t(t+1)\alpha^3 + 3(2t+1)\alpha^2 - 2\alpha$ 
and 
    $\delta''(\alpha) = -12t(t+1)\alpha^2 + 6(2t+1)\alpha - 2$ for $\alpha \in I_t$. 
With some calculation using calculus, we have
\[
    \delta'(\alpha) \ge \frac{1}{20} > 0, \quad \alpha \in \left[ \frac{1}{3}, \frac{2}{5} \right] \cup \left[\frac{1}{2}, \frac{7}{10} \right],
\]
and
\[
    \delta''(\alpha) \le -\frac{1}{2} < 0, \quad \alpha \in \cup_{t\ge 3} I_t \cup \left[\frac{2}{5}, 1/2 \right] \cup \left[\frac{7}{10}, 1 \right].
\]
By our choice of $M$, we have
\[
    \frac{\partial \tilde{q}}{\partial x_i} = \frac{\partial p}{\partial x_i} + M \delta'(x_i) > 0, \quad x_i \in \left[\frac{1}{3}, \frac{2}{5} \right] \cup \left[ \frac{1}{2}, \frac{7}{10} \right],
\]
and
\[
\frac{\partial^2 \tilde{q}}{\partial x_i^2} = \frac{\partial^2 p}{\partial x_i^2} + M \delta''(x_i) < 0, \quad x_i \in \cup_{t\ge 3} I_t \cup \left[ \frac{2}{5}, 1/2 \right] \cup \left[ \frac{7}{10}, 1 \right].
\]
Hence, we see that $\tilde{q} (x)$ achieves its locally minimal value when $x \in S^k$, in which case $\tilde{q}(x) = p(x) \ge 0$ as needed.

By Lemma \ref{lem:addenergy-upperbound}, 
    $(\alpha(A_i), \adde(A_i)) \in R$.
Hence, if there exists $A_i\subseteq G_i$ for $i=1, \ldots, k$ such that
    $q(\alpha(A_1), \ldots, \alpha(A_k), \adde(A_1), \ldots, \adde(A_k)) < 0$
this would imply
    $q(x,y) < 0$ for some $(x,y) \in R^k$.
On the other hand, note that the proof of the equivalence also shows that the local minima of $q(x,y)$ are achieved at $x \in S^k$. Hence, if $q(x,y) < 0$ for some $(x,y) \in R^k$ there will be $x \in S^k$ (and $y_i = x_i^3$) to certify this. The remark after the proof of Lemma \ref{lem:addenergy-upperbound} shows that there are $A_i \subseteq G_i$ such that $(\alpha(A_i), \adde(A_i))$ achieves these values, hence we will have
    $q(\alpha(A_1), \ldots, \alpha(A_k), \adde(A_1), \ldots, \adde(A_k)) < 0$.
Therefore, the undecidability result of Theorem \ref{thm:density-Energy-undecidable} follows from Lemma \ref{lem:und_p_on_Sk}.
\end{proof}

{\bf Acknowledgement} The author thanks Hamed Hatami for generously sharing his insights, and referees for useful comments.

%
% ---- Bibliography ----
%
% BibTeX users should spe%cify bibliography style 'splncs04'.
% References will then be sorted and formatted in the correct style.
%
 \bibliographystyle{plain}
 \bibliography{mybib}

\end{document}